\newcommand{\mathd}{\mathrm{d}}
\definecolor{grey}{rgb}{0.75,0.75,0.75}
\definecolor{orange}{rgb}{1.0,0.5,0.5}
\definecolor{brown}{rgb}{0.5,0.25,0.0}
\definecolor{pink}{rgb}{1.0,0.5,0.5}
\newtheorem{lemma}{Lemma}
\newtheorem{remark}{Remark}
\newtheorem{theorem}{Theorem}
\begin{document}

\title[Global Regularity of 2D Generalized MHD]{A Remark On Global Regularity of
2D Generalized Magnetohydrodynamic Equations}
\author[ Quansen Jiu, Jiefeng Zhao]{}
\maketitle


 \centerline{\scshape Quansen Jiu\footnote{The research is partially
supported by National Natural Sciences Foundation of China (No.
11171229, 11231006 and 11228102) and Project of Beijing Education
Committee.}}

\medskip
{\footnotesize
  \centerline{School of Mathematical Sciences, Capital Normal University}
  \centerline{Beijing  100048, P. R. China}
   \centerline{  \it Email: jiuqs@mail.cnu.edu.cn}

\vspace{3mm}

 \centerline{\scshape Jiefeng Zhao }

\medskip
{\footnotesize
  \centerline{School of Mathematical Sciences, Capital Normal University}
  \centerline{Beijing  100048, P. R. China}
   \centerline{  \it Email: zhaojiefeng001@163.com}


\begin{abstract}
  In this paper we study the global regularity of the following 2D (two-dimensional) generalized
  magnetohydrodynamic equations
\begin{eqnarray*}
 \left\{
 \begin{array}{llll}
   u_t + u \cdot \nabla u & = & - \nabla p + b \cdot \nabla b - \nu (-\triangle)^{\alpha} u
  \\
  b_t + u \cdot \nabla b & = & b \cdot \nabla u - \kappa (-\triangle)^{\beta} b
 \end{array}\right.
\end{eqnarray*}
and get global regular solutions when $ 0\leqslant\alpha < 1 /
2,\,\, \beta \geqslant 1, \,\,3\alpha + 2\beta >3 $, which improves
the results in \cite{TYZ2013}. In particular, we obtain the global
regularity of the 2D generalized MHD when $\alpha=0$ and
$\beta>\frac 32$.
\end{abstract}

\textbf{Keywords:} Generalized Magnetohydrodynamic equations, Global
regularity

\section{Introduction}


Consider the  Cauchy problem of the 2D (two-dimensional) generalized
magnetohydrodynamic equations:
\begin{eqnarray}
\left\{
 \begin{array}{llll}\label{eq}
  u_t + u \cdot \nabla u  =  - \nabla p + b \cdot \nabla b - \nu (-\triangle)^{
  \alpha} u,  \\
  b_t + u \cdot \nabla b = b \cdot \nabla u - \kappa (-\triangle)^{\beta} b,\\
  \nabla \cdot u = \nabla \cdot b  =  0, \\
  u\left(x,0\right)=u_0\left(x\right),\,\,\, b\left(x,0\right)=b_0\left(x\right)
  \end{array}\right.
\end{eqnarray}
for $x\in \mathbb{R}^2$ and $t>0$, where $ u=u\left(x,t\right) $ is
the velocity field, $ b =b\left(x,t\right) $ is the magnetic field,
$ p =p\left(x,t\right) $ is the pressure, and $
u_0\left(x\right),\,b_0\left(x\right) $ with $\mathrm{div}
u_0\left(x\right)=\mathrm{div} b_0\left(x\right)=0$ are the initial
velocity and magnetic field, respectively. Here $\nu, \kappa,
\alpha, \beta \ge 0$ are nonnegative constants and
$(-\triangle)^{1/2}$ is defined through the Fourier transform by
\begin{equation*}
  ((-\triangle)^{1/2} f)^{\wedge}(\xi) = \left| \xi \right| \widehat{f}(\xi),
\end{equation*}
where $\wedge$ denotes the Fourier transform. Later, we will use the
inverse Fourier transform $\vee$. As usual, we write
$(-\triangle)^{1/2}$ as $\Lambda$. To simplify the presentation, we
will assume $\nu=\kappa=1$ when $\alpha>0$ and $\nu=0,\,\,\kappa=1$
when $\alpha=0$.

There have been extensive studies  on the global regularity of
solutions to  (\ref{eq}) (see e.g.
\cite{W2003}\cite{W2011}\cite{TYZ2013} and references therein). It
follows from \cite{W2011} that the problem (\ref{eq}) has a unique
global regular solution if
\begin{displaymath}
  \alpha \geqslant 1 , \ \  \beta > 0,
  \ \ \alpha + \beta \geqslant 2.
\end{displaymath}
Recently,  Tran, Yu and Zhai \cite{TYZ2013} proved that if
\begin{equation*}
  \alpha \geqslant 1 / 2,\,\, \beta \geqslant 1 \hspace{2em}\mbox{or}\hspace{2em}
  0 \leqslant \alpha < 1 / 2,\,\, 2 \alpha + \beta > 2 \hspace{2em}\mbox{or}\hspace{2em}
  \alpha \geqslant 2, \,\,\beta = 0,
\end{equation*}
then the solution is global regular.

In this paper, we will prove the global regularity to \eqref{eq}
when $ 0\leqslant\alpha < 1 / 2,\,\, \beta \geqslant 1, \,\,3\alpha
+ 2\beta >3 $.  Denote the vorticity by $\omega = - \partial_2 u_1 +
\partial_1 u_2$
  and the current by $j  = - \partial_2 b_1 + \partial_1 b_2$. We will prove that $\omega, j\in L^2(0,T;L^\infty)$
  and hence obtain the global regularity of the solution due to BKM type criterion (see \cite{CKS1997}). To this end,
  based on the estimates of $\omega, j$ in $L^\infty(0,T; L^2)$ in \cite{TYZ2013}, we will prove a new estimate on $\Lambda^r
  j$ for some $r>0$
  in this paper (see Lemma 2). Our result improves ones in
  \cite{TYZ2013} and in particular, if $\alpha=0,\,\,\beta>\frac{3}{2}$,
the problem (\ref{eq}) has a global regular solution.

Our main result is stated as
\begin{theorem}
   Suppose that $\left( u_0, b_0 \right)\in H^k$ with $k > \max\{2,\alpha + \beta\}$. If
 \begin{equation*}
    0\leqslant\alpha < 1 / 2,\,\, \beta \geqslant 1,\,\, 3\alpha + 2\beta >3,
  \end{equation*}
 then the Cauchy problem has a unique  global regular solution.
\end{theorem}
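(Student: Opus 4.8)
The plan is to reduce the theorem to an $L^2$-in-time, $L^\infty$-in-space bound on the vorticity and current, and to obtain that bound from one new energy estimate. By the BKM-type continuation criterion of \cite{CKS1997}, a local $H^k$ solution (which exists for $k>\max\{2,\alpha+\beta\}$) persists on $[0,T]$ as soon as $\int_0^T(\|\omega\|_{L^\infty}+\|j\|_{L^\infty})\,\mathd t<\infty$; I will in fact establish $\omega,j\in L^2(0,T;L^\infty)$, which suffices by Cauchy--Schwarz. The unconditional inputs are the basic energy identity, giving $u,b\in L^\infty(0,T;L^2)$ and $\Lambda^\beta b\in L^2(0,T;L^2)$, together with the $L^2$ vorticity--current bounds of \cite{TYZ2013}, namely $\omega,j\in L^\infty(0,T;L^2)$ and $\Lambda^\alpha\omega,\Lambda^\beta j\in L^2(0,T;L^2)$.

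Taking the two-dimensional curl of \eqref{eq} gives
\begin{eqnarray*}
\omega_t+u\cdot\nabla\omega &=& b\cdot\nabla j-\nu\Lambda^{2\alpha}\omega,\\
j_t+u\cdot\nabla j &=& b\cdot\nabla\omega+T-\kappa\Lambda^{2\beta}j,
\end{eqnarray*}
where $T$ is quadratic in $\nabla u$ and $\nabla b$ and comes from the curl of the stretching terms. Two features drive the analysis: the coupling terms cancel at leading order, $\int b\cdot\nabla j\,\omega+\int b\cdot\nabla\omega\,j=0$ since $\nabla\cdot b=0$ (this is what yields the $L^2$ bounds above), and the dissipations are strongly asymmetric, $\Lambda^{2\beta}$ with $\beta\ge 1$ being coercive while $\Lambda^{2\alpha}$ with $\alpha<1/2$ is sub-critical. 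The strategy is therefore to exploit the $j$-dissipation to manufacture extra regularity for $j$, and then to transfer it to $\omega$ through the coupling.

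The core is Lemma 2, a bound on $\Lambda^r j$ for a suitable $r>0$. Applying $\Lambda^r$ to the current equation and pairing with $\Lambda^r j$ yields
\begin{equation*}
\tfrac12\tfrac{\mathd}{\mathd t}\|\Lambda^r j\|_{L^2}^2+\kappa\|\Lambda^{r+\beta}j\|_{L^2}^2=-\int\Lambda^r(u\cdot\nabla j)\Lambda^r j+\int\Lambda^r(b\cdot\nabla\omega)\Lambda^r j+\int\Lambda^r T\,\Lambda^r j .
\end{equation*}
The transport term reduces to a commutator after the exact cancellation $\int u\cdot\nabla\Lambda^r j\,\Lambda^r j=0$. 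For the coupling term I would write $b\cdot\nabla\omega=\nabla\cdot(b\omega)$ and integrate by parts so that $\omega$ appears with as few derivatives as possible (ideally none), then distribute the remaining derivatives between $b$ and $j$ by a fractional Leibniz rule; $\omega$ is estimated using only $\omega\in L^\infty_tL^2\cap L^2_t\dot H^\alpha$, $b$ using $\Lambda^{1+r}b\sim\Lambda^r j$, and the top-order $j$-factor is absorbed into $\kappa\|\Lambda^{r+\beta}j\|_{L^2}^2$. Since $T\sim(\nabla u)(\nabla b)$ behaves like $\omega\,j$ at the level of Riesz transforms, $\int\Lambda^r T\,\Lambda^r j$ is a trilinear expression treated by the same Gagliardo--Nirenberg and Young scheme. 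Balancing these interpolation exponents so that every dissipative factor enters with a power strictly below $2$ and the residual factors are time-integrable is precisely where $3\alpha+2\beta>3$ is used: it is the sharp algebraic condition guaranteeing an admissible range of $r$. Gronwall then gives $\Lambda^r j\in L^\infty(0,T;L^2)$ and $\Lambda^{r+\beta}j\in L^2(0,T;L^2)$.

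With Lemma 2 available the two target bounds follow. Because $\beta\ge1$ and $r>0$, $\Lambda^{r+\beta}j\in L^2(0,T;L^2)$ puts $j$ in $L^2(0,T;H^{r+\beta})$ with $r+\beta>1$, so the two-dimensional embedding $H^{r+\beta}\hookrightarrow L^\infty$ yields $j\in L^2(0,T;L^\infty)$. For $\omega$ I would feed the improved forcing $b\cdot\nabla j$ back into the vorticity equation: pairing $\Lambda^\sigma\omega$ with $\Lambda^\sigma(b\cdot\nabla j)$ and keeping the derivative on $j$, the forcing sits in $L^2(0,T;L^2)$ whenever $\sigma+1\le r+\beta$, and choosing $\sigma>1-\alpha$ the sub-critical dissipation still upgrades $\omega$ to $L^2(0,T;\dot H^{\sigma+\alpha})\hookrightarrow L^2(0,T;L^\infty)$; this is consistent provided $r>2-\alpha-\beta$, which the admissible range from Lemma 2 delivers under $3\alpha+2\beta>3$. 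Invoking \cite{CKS1997} then gives the unique global regular solution. I expect the genuine obstacle to be Lemma 2 itself --- simultaneously taming the gradient-product term $T$ and the $\omega$-coupling under the weak velocity dissipation $\alpha<1/2$ --- and the whole argument is tuned so that the borderline case is exactly $3\alpha+2\beta>3$.
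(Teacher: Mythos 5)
Your skeleton up to the current estimate matches the paper: the same key lemma (a $\Lambda^r j$ energy estimate with $r=\alpha+\beta-1$, the transport term reduced by writing $u\cdot\nabla j=\nabla\cdot(uj)$ and pairing against the full dissipation $\Lambda^{\beta+r}j$), and your observation that this lemma already gives $j\in L^2(0,T;H^{\beta+r})\hookrightarrow L^2(0,T;L^\infty)$ is correct. The genuine gap is in the transfer of regularity to $\omega$, which is the actual crux. Your $\dot H^{\sigma}$ energy estimate for $\omega$ discusses only the forcing $\Lambda^{\sigma}(b\cdot\nabla j)$ and silently drops the transport term $\int\Lambda^{\sigma}(u\cdot\nabla\omega)\,\Lambda^{\sigma}\omega\,\mathd x$. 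That term does not vanish (only $\int u\cdot\nabla(\Lambda^{\sigma}\omega)\,\Lambda^{\sigma}\omega\,\mathd x=0$ does), and with the weak dissipation $\alpha<1/2$ it is precisely the obstruction: estimating it costs essentially $\|\omega\|_{L^p}$ with $p>1/\alpha$, which you do not have at that stage. The paper supplies exactly this missing ingredient: an $L^p$ maximum-principle bound $\frac{\mathd}{\mathd t}\|\omega\|_{L^p}\leqslant\|b\|_{L^{\infty}}\|\nabla j\|_{L^p}$ for the vorticity equation, closed using Lemma 2, and it is here --- in the embedding $H^{\beta+r-1}\hookrightarrow L^p$ for some $p>1/\alpha$ --- that the hypothesis $3\alpha+2\beta>3$ is actually used. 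Your claim that $3\alpha+2\beta>3$ is what closes Lemma 2 itself is incorrect: Lemma 2 needs only $0\leqslant\alpha<1/2$, $\beta\geqslant1$ and $r=\alpha+\beta-1>0$.

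Moreover, even granting a forcing-only treatment, your exponent arithmetic does not cover the stated range. You need $\sigma>1-\alpha$ (for the embedding into $L^{\infty}$) and $\sigma+1\leqslant\beta+r$ (for the forcing), i.e.\ $r>2-\alpha-\beta$; with $r=\alpha+\beta-1$ this reads $2(\alpha+\beta)>3$, which is strictly stronger than $3\alpha+2\beta>3$ when $\alpha<1/2$. For instance $\alpha=2/5$, $\beta=1$ satisfies the theorem's hypotheses, yet $2(\alpha+\beta)=14/5<3$; and no larger $r$ is available from your own sketch of Lemma 2, since the reduction of its transport term produces $\Lambda^{r+1-\beta}$ of a product, whose Gagliardo--Nirenberg treatment requires $r+1-\beta<1/2$, i.e.\ $r<\beta-1/2$, while $2-\alpha-\beta<\beta-1/2$ fails in the same example ($\alpha+2\beta=12/5<5/2$). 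Finally, the case $\alpha=0$ (where $\nu=0$, so the vorticity equation carries no dissipation at all) lies entirely outside your scheme, which relies on the dissipation to upgrade $\omega$. The paper treats it separately: for $\beta>3/2$ one has $\beta+r>2$, hence $\nabla j\in L^2(0,T;L^{\infty})$ by Lemma 2, and $\|\omega\|_{L^{\infty}}$ is then bounded directly along particle trajectories from $\omega_t+u\cdot\nabla\omega=b\cdot\nabla j$, with no energy estimate and no commutator needed.
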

\begin{remark}
When $\alpha+\beta>2$, $k > 2$,  the global regularity has been
proved in \cite{TYZ2013}.
\end{remark}
\begin{remark}
For the 2D MHD equations, it remains open to prove the global
regularity when $\alpha=0,\,\,\beta=1$ or $\beta=0,\,\,\alpha=1$.
\end{remark}

\section{A Priori estimates}\label{sec:a priori estimates}
In this section, we will give  a priori estimates for $\omega$ and
$j$. For convenience, we use the same notation as in \cite{TYZ2013}.
Let $\omega$ and $j$ denote the vorticity and the current
respectively, where $\omega = \nabla^{\bot} \cdot u = - \partial_2
u_1 + \partial_1 u_2$
  and $j = \nabla^{\bot} \cdot b = - \partial_2 b_1 + \partial_1 b_2$.
Applying $\nabla^{\bot} \cdot$ to the equations (\ref{eq}), we obtain the following equations for
   $\omega$ and $j$:
  \begin{eqnarray}
    \omega_t + u \cdot \nabla \omega & = & b \cdot \nabla j - \Lambda^{2
    \alpha} \omega,  \label{eq:omega-L2}\\
    j_t + u \cdot \nabla j & = & b \cdot \nabla \omega + T \left( \nabla u,
    \nabla b \right) - \Lambda^{2 \beta} j,  \label{eq:j-L2}
  \end{eqnarray}
 where
  \begin{equation*}
    T \left( \nabla u, \nabla b \right) = {\color{black} 2 \partial_1 b_1
    \left( \partial_1 u_2 + \partial_2 u_1 \right) + 2 \partial_2 u_2  \left(
    \partial_1 b_2 + \partial_2 b_1 \right)} .
  \end{equation*}

The $L^\infty(0,T;L^2(\mathbb{R}^2))$ estimates for $\omega, j$ are
obtained in \cite{TYZ2013}, which is
\begin{lemma}(\cite{TYZ2013})
  Suppose that  $\alpha \geqslant 0\,\,\beta \geqslant 1$.
  Let $u_0, b_0 \in H^1$. For any $T > 0$, we have
  \begin{equation}
    \left\| \omega \right\|_{L^2}^2 \left( t \right) + \left\| j
    \right\|_{L^2}^2 \left( t \right) + \int_0^t \left( \left\|
    \Lambda^{\alpha} \omega \right\|_{L^2}^2 + \left\| \Lambda^{\beta} j
    \right\|_{L^2}^2 \right) \mathd \tau \leqslant C \left( T
    \right) .
  \end{equation}
\end{lemma}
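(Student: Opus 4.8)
The plan is to obtain the combined $L^2$ bound for $(\omega, j)$ by an energy argument layered on top of the basic energy law, using the time-integrated dissipation of $b$ as an integrable weight in a final Gr\"onwall step. First I would record the basic energy identity: testing the velocity equation with $u$ and the magnetic equation with $b$ and adding, the pressure and the term $u\cdot\nabla u$ drop by incompressibility, while the two coupling terms produced by $b\cdot\nabla b$ and $b\cdot\nabla u$ cancel. This gives $\|u\|_{L^2}^2(t)+\|b\|_{L^2}^2(t)+2\int_0^t(\|\Lambda^{\alpha}u\|_{L^2}^2+\|\Lambda^{\beta}b\|_{L^2}^2)\,\mathd\tau=\|u_0\|_{L^2}^2+\|b_0\|_{L^2}^2$. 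The crucial by-product is $\int_0^T\|\Lambda^{\beta}b\|_{L^2}^2\,\mathd\tau\leqslant C(T)$; since $b$ is divergence free and $j=\nabla^{\perp}\cdot b$, one has $\|\Lambda^{\beta}b\|_{L^2}=\|\Lambda^{\beta-1}j\|_{L^2}$ on the Fourier side, so $\int_0^T\|\Lambda^{\beta-1}j\|_{L^2}^2\,\mathd\tau\leqslant C(T)$. This is the quantity I will later use as an integrable-in-time coefficient.

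Next I would derive the vorticity--current estimate. Testing \eqref{eq:omega-L2} with $\omega$ and \eqref{eq:j-L2} with $j$ and adding, the transport terms vanish by $\nabla\cdot u=0$, and the two magnetic coupling terms combine into $\int b\cdot\nabla(\omega j)=-\int(\nabla\cdot b)\,\omega j=0$. Everything then reduces to
\[
\frac{1}{2}\frac{\mathd}{\mathd t}\bigl(\|\omega\|_{L^2}^2+\|j\|_{L^2}^2\bigr)+\|\Lambda^{\alpha}\omega\|_{L^2}^2+\|\Lambda^{\beta}j\|_{L^2}^2=\int T(\nabla u,\nabla b)\,j\,\mathd x .
\]
Only the strong dissipation $\|\Lambda^{\beta}j\|_{L^2}^2$ (available because $\beta\geqslant1$) can be exploited, since $\alpha$ may vanish.

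The \textbf{main obstacle} is the cubic term on the right, and handling it correctly is the heart of the proof. Since $T\sim\nabla u\,\nabla b$, we have $|\int Tj|\lesssim\int|\nabla u||\nabla b||j|$, and because there is no useful dissipation for $u$ I must keep exactly one derivative on $u$, writing $|\int Tj|\lesssim\|\nabla u\|_{L^2}\|\nabla b\|_{L^4}\|j\|_{L^4}$. By Calder\'on--Zygmund theory $\|\nabla u\|_{L^2}\sim\|\omega\|_{L^2}$ and $\|\nabla b\|_{L^4}\lesssim\|j\|_{L^4}$, so $|\int Tj|\lesssim\|\omega\|_{L^2}\|j\|_{L^4}^2$. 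The naive move, bounding $\|j\|_{L^4}^2\lesssim\|j\|_{L^2}^{2-1/\beta}\|\Lambda^{\beta}j\|_{L^2}^{1/\beta}$ and absorbing the $\Lambda^\beta$ factor into the dissipation, yields only a Riccati inequality $\frac{\mathd}{\mathd t}X\lesssim X^2$ and hence local control. The key idea is instead to route part of the regularity through the time-integrable norm $\|\Lambda^{\beta-1}j\|_{L^2}$: using the embedding $\dot H^{1/2}(\mathbb{R}^2)\hookrightarrow L^4$ and interpolating the order $1/2$ between $\|j\|_{L^2}$, $\|\Lambda^{\beta-1}j\|_{L^2}$ and $\|\Lambda^{\beta}j\|_{L^2}$ (the precise split depending on whether $\beta\geqslant 3/2$ or $1\leqslant\beta<3/2$), followed by Young's inequality to absorb the $\|\Lambda^{\beta}j\|_{L^2}$ factor into the dissipation, I expect to reach $|\int Tj|\leqslant\tfrac12\|\Lambda^{\beta}j\|_{L^2}^2+C\,\|\Lambda^{\beta-1}j\|_{L^2}^2\,(1+\|\omega\|_{L^2}^2+\|j\|_{L^2}^2)$.

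Finally I would conclude. Substituting this bound gives $\frac{\mathd}{\mathd t}(\|\omega\|_{L^2}^2+\|j\|_{L^2}^2)\lesssim\|\Lambda^{\beta-1}j\|_{L^2}^2\,(1+\|\omega\|_{L^2}^2+\|j\|_{L^2}^2)$. Since the coefficient $\|\Lambda^{\beta-1}j\|_{L^2}^2$ is integrable on $[0,T]$ by the first step, Gr\"onwall's inequality yields the stated $L^{\infty}(0,T;L^2)$ bound, and integrating the differential inequality back in time recovers the $\int_0^t(\|\Lambda^{\alpha}\omega\|_{L^2}^2+\|\Lambda^{\beta}j\|_{L^2}^2)$ term. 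Thus the essential mechanism is the integrable-weight Gr\"onwall fed by the basic energy estimate, which is what allows the cubic interaction $\int Tj$ to be controlled despite the absence of velocity dissipation.
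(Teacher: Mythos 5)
Your overall architecture (basic energy law, then an integrable-in-time weight feeding a Gr\"onwall argument for $X:=\|\omega\|_{L^2}^2+\|j\|_{L^2}^2$) is the right one, and everything up to $\bigl|\int T(\nabla u,\nabla b)\,j\,\mathd x\bigr|\lesssim\|\omega\|_{L^2}\|j\|_{L^4}^2$ is correct; note the paper itself offers no proof of this lemma (it is quoted from \cite{TYZ2013}), so your argument has to stand alone. The gap is in your key step. Write $G=\|\Lambda^{\beta-1}j\|_{L^2}$, $D=\|\Lambda^{\beta}j\|_{L^2}$. For $\beta\geqslant 3/2$ your interpolation gives $\|j\|_{L^4}^2\lesssim\|j\|_{L^2}^{2-\frac{1}{\beta-1}}G^{\frac{1}{\beta-1}}$, so the cubic term is bounded by $G^{\frac{1}{\beta-1}}X^{\frac{3}{2}-\frac{1}{2(\beta-1)}}$; the exponent of $X$ exceeds $1$ as soon as $\beta>2$, and then Gr\"onwall does not close --- you are back to a superlinear (Riccati-type) inequality. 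Bringing the dissipation into the split does not save this: writing $\|\Lambda^{1/2}j\|_{L^2}\leqslant\|j\|_{L^2}^{\theta_1}G^{\theta_2}D^{\theta_3}$ with $(\beta-1)\theta_2+\beta\theta_3=\tfrac12$ and absorbing $D^{2\theta_3}$ by Young, the remainder is at most linear in $X$ only if $\theta_2\geqslant\tfrac12$, which combined with the derivative-count constraint forces $\beta\leqslant2$. So the range $\beta>2$ of the lemma is simply not covered. Moreover, the unified inequality you claim, with coefficient exactly $G^2$, is false for every $\beta>3/2$: take $\omega$ of unit $L^2$ norm and $j$ of small norm, both built from fixed profiles rescaled to frequency $N\ll1$; then the left side is $\sim N\|j\|_{L^2}^2$ while the right side is $\sim N^{2\beta-2}\|j\|_{L^2}^2$, and $N^{3-2\beta}\to\infty$ as $N\to0$. (For $3/2\leqslant\beta\leqslant2$ your scheme does work, but with the weaker, still time-integrable, coefficient $G^{1/(\beta-1)}$.)

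The repair is to use a different integrable weight, one your own first step already provides: $\|j\|_{L^2}^2$ itself. From $\|b\|_{L^\infty(0,T;L^2)}\leqslant C$ and $\int_0^T\|\Lambda^{\beta}b\|_{L^2}^2\,\mathd\tau\leqslant C$, interpolation $\|j\|_{L^2}=\|\Lambda b\|_{L^2}\leqslant\|b\|_{L^2}^{1-1/\beta}\|\Lambda^{\beta}b\|_{L^2}^{1/\beta}$ gives $\int_0^T\|j\|_{L^2}^{2\beta}\,\mathd\tau\leqslant C$, hence $\|j\|_{L^2}^2\in L^1(0,T)$ for every $\beta\geqslant1$. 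With this observation, the ``naive move'' you dismissed closes globally: $\|j\|_{L^4}^2\lesssim\|j\|_{L^2}\|\Lambda j\|_{L^2}\leqslant\|j\|_{L^2}^{2-1/\beta}D^{1/\beta}$, and Young's inequality (using $(2-\tfrac1\beta)\tfrac{2\beta}{2\beta-1}=2$ and $\tfrac{2\beta}{2\beta-1}\leqslant2$) yields a bound on the cubic term by $\tfrac12 D^2+C\|\omega\|_{L^2}^{\frac{2\beta}{2\beta-1}}\|j\|_{L^2}^2\leqslant\tfrac12 D^2+C\|j\|_{L^2}^2(1+X)$. This is precisely an integrable-weight Gr\"onwall inequality, not a Riccati one: your dismissal overlooked that the coefficient $\|j\|_{L^2}^2$ appearing in the naive estimate is exactly the quantity made integrable by the energy law. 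With this change the case split disappears and the whole range $\alpha\geqslant0$, $\beta\geqslant1$ is covered uniformly.
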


The following is a key lemma of this paper which will be used later.
\begin{lemma}
  Suppose that $0\leqslant\alpha<\frac{1}{2}, \beta\geqslant1, r=\alpha+\beta-1>0 \,\,\mbox{and}\,\, k \geqslant \alpha+\beta$.
  Let $u_0, b_0 \in H^k$. Then for any $T > 0$, we have
  \begin{equation}
     \left\|\Lambda^r j\right\|_{L^2}^2 \left( t \right) + \int_0^t \left\| \Lambda^{\beta+r} j
    \right\|_{L^2}^2  \mathd \tau \leqslant C \left( u_0, b_0, T
    \right) .
  \end{equation}
\end{lemma}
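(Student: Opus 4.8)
The plan is to run a homogeneous $\dot{H}^r$ energy estimate on the current equation \eqref{eq:j-L2}. Applying $\Lambda^r$ to \eqref{eq:j-L2}, pairing with $\Lambda^r j$ in $L^2(\mathbb{R}^2)$, and using $\nabla\cdot u=0$ to discard the leading part of the advection term, I would obtain
\begin{equation*}
\frac{1}{2}\frac{\mathd}{\mathd t}\|\Lambda^r j\|_{L^2}^2+\|\Lambda^{\beta+r}j\|_{L^2}^2=I_1+I_2+I_3 ,
\end{equation*}
where $I_1=-\int[\Lambda^r,u\cdot\nabla]j\,\Lambda^r j\,\mathd x$, $I_2=\int\Lambda^r(b\cdot\nabla\omega)\,\Lambda^r j\,\mathd x$ and $I_3=\int\Lambda^r T(\nabla u,\nabla b)\,\Lambda^r j\,\mathd x$. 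The good term $\|\Lambda^{\beta+r}j\|_{L^2}^2$ is what I want to absorb the top-order contributions into. Throughout I would pass between the fields and their curls by the Biot--Savart relations $\Lambda^s u\simeq\Lambda^{s-1}\omega$ and $\Lambda^s b\simeq\Lambda^{s-1}j$ (bounded Riesz multipliers), and I would use two interpolation ladders: for $\omega$, between $\|\omega\|_{L^2}$ and $\|\Lambda^\alpha\omega\|_{L^2}$ (both furnished by Lemma 1); for $j$, between the quantity $\|\Lambda^r j\|_{L^2}$ being estimated and the dissipation $\|\Lambda^{\beta+r}j\|_{L^2}$ produced on the left.

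The terms $I_1$ and $I_3$ I expect to be routine. For $I_1$ a commutator (Kenig--Ponce--Vega/Kato--Ponce) estimate reduces matters to $\|\nabla u\|_{L^p}\simeq\|\omega\|_{L^p}$, controlled by $\|\Lambda^\alpha\omega\|_{L^2}$ through the Sobolev embedding $\dot{H}^\alpha\hookrightarrow L^{2/(1-\alpha)}$, together with a factor $\|\Lambda^{r-1}\omega\|_{L^2}$ of low order and some $j$-derivatives that are absorbed into $\|\Lambda^{\beta+r}j\|_{L^2}$ after interpolation and Young. For $I_3$, since $T(\nabla u,\nabla b)$ is bilinear in $\nabla u\simeq\omega$ and $\nabla b\simeq j$, the term is trilinear in $(\omega,j,j)$ and carries only $2r$ derivatives; distributing at most $\alpha$ of them onto $\omega$ and the rest onto the two $j$-factors (never above order $\beta+r$) closes with room to spare.

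The heart of the matter, and the step I expect to fight with, is the magnetic coupling term $I_2=\int\Lambda^r(b\cdot\nabla\omega)\,\Lambda^r j\,\mathd x$. The obstruction is that it contains a derivative of $\omega$, while the velocity dissipation is only of order $\alpha<1/2$, so that $\Lambda^s\omega$ is simply unavailable for $s>\alpha$; a naive fractional-Leibniz bound $\|b\|_{L^\infty}\|\Lambda^r\omega\|_{L^2}\|\Lambda^{1+r}j\|_{L^2}$ is therefore useless once $\beta>1$. The way out is to treat $I_2$ as a genuine trilinear form and redistribute the derivatives rather than applying the product rule factor by factor: writing $b\cdot\nabla\omega=\nabla\cdot(b\omega)$ and decomposing $\int\Lambda^r(b\cdot\nabla\omega)\,\Lambda^r j\,\mathd x$ by paraproducts/Littlewood--Paley, I can let $\omega$ carry only $\alpha$ derivatives, $b$ carry none, and push all remaining derivatives onto $j$. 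The key bookkeeping is the identity $\beta+r=2r+1-\alpha$ (which is just $r=\alpha+\beta-1$), so that the $2r+1$ derivatives split as $\alpha$ on $\omega$ plus $\beta+r$ on $j$; summing the frequency pieces by Cauchy--Schwarz then yields
\begin{equation*}
|I_2|\lesssim\|b\|_{L^\infty}\,\|\Lambda^\alpha\omega\|_{L^2}\,\|\Lambda^{\beta+r}j\|_{L^2},
\end{equation*}
with $\|b\|_{L^\infty}\lesssim\|b\|_{L^2}+\|\Lambda^{1+r}b\|_{L^2}\simeq\|b\|_{L^2}+\|\Lambda^r j\|_{L^2}$ by the embedding $\dot{H}^{1+r}\hookrightarrow L^\infty$ (valid since $r>0$).

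Finally I would close the estimate. Applying Young's inequality to the bounds for $I_1,I_2,I_3$ absorbs every occurrence of $\|\Lambda^{\beta+r}j\|_{L^2}$ into the dissipation on the left and leaves a differential inequality of the form
\begin{equation*}
\frac{\mathd}{\mathd t}\|\Lambda^r j\|_{L^2}^2+\|\Lambda^{\beta+r}j\|_{L^2}^2\lesssim g(t)\bigl(1+\|\Lambda^r j\|_{L^2}^2\bigr),
\end{equation*}
where, after the interpolation powers are accounted for, $g(t)\lesssim\|\Lambda^\alpha\omega\|_{L^2}^2\bigl(1+\|b\|_{L^2}^2\bigr)$. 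Since $\int_0^T\|\Lambda^\alpha\omega\|_{L^2}^2\,\mathd\tau\le C(T)$ by Lemma 1 and $\|b\|_{L^2}$ is bounded, $g\in L^1(0,T)$, and Grönwall's inequality gives the stated bound on $\|\Lambda^r j\|_{L^2}^2(t)+\int_0^t\|\Lambda^{\beta+r}j\|_{L^2}^2\,\mathd\tau$. The only genuinely delicate point is the redistribution of derivatives in $I_2$: because the velocity dissipation is weak, one is forced to exploit the full strength $\beta+r$ of the magnetic dissipation together with the identity $\beta+r=2r+1-\alpha$.
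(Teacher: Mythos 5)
Your overall strategy coincides with the paper's: an $\dot{H}^r$ energy estimate in which the strong dissipation $\|\Lambda^{\beta+r}j\|_{L^2}$ is used by duality to strip derivatives off the nonlinear terms, with exactly the bookkeeping $\alpha+(\beta+r)=2r+1$ you isolate, closed by Gr\"onwall via Lemma 1. However, two of your steps are wrong as stated. The first is your treatment of $I_1$ by a standard Kato--Ponce/Kenig--Ponce--Vega commutator estimate: that estimate inevitably produces a term in which all $r$ derivatives land on $u$, i.e.\ a factor $\|\Lambda^r u\|\simeq\|\Lambda^{r-1}\omega\|$, which you dismiss as ``of low order''. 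But $r-1=\alpha+\beta-2$, and the lemma (and its use in the theorem, e.g.\ Case II with $\beta>\frac32$) places no upper bound on $\beta$; once $\alpha+\beta>2$ this norm lies strictly above everything Lemma 1 provides ($\omega\in L^\infty_t L^2_x$, $\Lambda^\alpha\omega\in L^2_t L^2_x$ with $\alpha<\frac12$), so the step collapses. The commutator cancellation buys nothing here; the correct move --- and the paper's --- is to treat $I_1$ exactly as you treat $I_2$: write $u\cdot\nabla j=\nabla\cdot(uj)$ and shift $\beta$ derivatives onto the outer factor, so that $|I_1|\leqslant C\|\Lambda^\alpha(uj)\|_{L^2}\|\Lambda^{\beta+r}j\|_{L^2}$, after which only $\alpha<\frac12$ derivatives ever fall on $u$ or on the inner $j$, and Gagliardo--Nirenberg closes with Lemma 1 quantities.

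The second flaw is in the term you correctly identify as the heart of the matter: the bound $|I_2|\lesssim\|b\|_{L^\infty}\|\Lambda^\alpha\omega\|_{L^2}\|\Lambda^{\beta+r}j\|_{L^2}$ is not attainable. In the paraproduct piece where $b$ carries the dominant frequency $N$, the low-frequency factor $\omega$ cannot absorb any power of $N$, and the outer $j$ can absorb at most $N^{\beta+r}$; the leftover weight $N^{\alpha}$ must therefore land on $b$, so a second term of the form $\|\Lambda^\alpha b\|_{L^\infty}\|\omega\|_{L^2}\|\Lambda^{\beta+r}j\|_{L^2}$ is unavoidable. (Equivalently, the bilinear inequality $\|\Lambda^\alpha(b\omega)\|_{L^2}\lesssim\|b\|_{L^\infty}\|\Lambda^\alpha\omega\|_{L^2}$ is false: take $b$ highly oscillatory at frequency $N$ and $\omega$ a fixed bump.) The gap is harmless but must be filled as the paper does: keep the extra term, bound $\|\Lambda^\alpha b\|_{L^\infty}\lesssim\|b\|_{L^2}^{\frac{\beta-\alpha}{1+\beta}}\|\Lambda^{\beta+1}b\|_{L^2}^{\frac{1+\alpha}{1+\beta}}$ with $\|\Lambda^{\beta+1}b\|_{L^2}\simeq\|\Lambda^\beta j\|_{L^2}$, and observe that after Young the resulting coefficient $\|\Lambda^\beta j\|_{L^2}^{\frac{2(1+\alpha)}{1+\beta}}$ lies in $L^1(0,T)$ by Lemma 1 because $\frac{2(1+\alpha)}{1+\beta}\leqslant 2$. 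With these two repairs --- both instances of the redistribution idea you already articulate --- your Gr\"onwall closure goes through, with $g(t)$ enlarged by these additional $L^1$-in-time coefficients.
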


\begin{proof}
Applying $\Lambda^r$ on both sides of  the equation(\ref{eq:j-L2}),
we obtain
 \begin{eqnarray}
    (\Lambda^r j)_t +\Lambda^r( u \cdot \nabla j ) =  \Lambda^r(b \cdot \nabla \omega) + \Lambda^r(T \left( \nabla u,
    \nabla b \right)) - \Lambda^{2 \beta+r} j.  \label{eq:j-Hr}
  \end{eqnarray}
Multiplying (\ref{eq:j-Hr}) by $\Lambda^r j$ and integrating with
respect to $x$ in $\mathbb{R}^2$, we obtain
\begin{eqnarray}\label{7}
    \frac{1}{2}  \frac{\mathd}{\mathd t} \left\|\Lambda^r j \right\|_{L^2}^2 +\left\|\Lambda^{\beta+r}j \right\|_{L^2}^2 &=& -\int_{\mathbbm{R}^2} \Lambda^r \left(u \cdot\nabla j \right) \Lambda^r j \mathd x
     +\int_{\mathbbm{R}^2} \Lambda^r \left(b \cdot\nabla \omega \right) \Lambda^r j \mathd x
    \nonumber\\
    & &+\int_{\mathbbm{R}^2} \Lambda^r T \left( \nabla u, \nabla b\right) \Lambda^r j \mathd x
    \nonumber\\
    &\equiv& I_1+I_2+I_3.
  \end{eqnarray}
 $I_1$ is estimated as follows:
\begin{eqnarray*}
  \left|I_1\right|
  & &=\left|\int_{\mathbbm{R}^2} \Lambda^r \left(u \cdot\nabla j \right) \Lambda^r j \mathd x\right|
  \nonumber\\
  & &=\left|\int_{\mathbbm{R}^2} \Lambda^{-1}\nabla\cdot\Lambda^\alpha \left(u j \right) \Lambda^{\beta+r}j \mathd x\right|
  \nonumber\\
  & &\leqslant \left\|\Lambda^{-1}\nabla\cdot\Lambda^\alpha(u j)\right\|_{L^2} \left\|\Lambda^{\beta+r}j\right\|_{L^2}
  \nonumber\\
  & &\leqslant C\left\|\Lambda^\alpha(u j)\right\|_{L^2} \left\|\Lambda^{\beta+r}j\right\|_{L^2}
  \nonumber\\
  & &= C\left\|\int_{\mathbbm{R}^2}|\xi|^\alpha\hat{u}(\xi-\eta)\hat{j}(\eta)\mathd \eta \right\|_{L^2}\left\|\Lambda^{\beta+r}j\right\|_{L^2}
  \nonumber\\
  & &\leqslant C \left(\left\|\int_{\mathbbm{R}^2}|\xi-\eta|^\alpha\left|\hat{u}(\xi-\eta)\right|\left|\hat{j}(\eta)\right|\mathd \eta \right\|_{L^2} + \left\|\int_{\mathbbm{R}^2}\left|\hat{u}(\xi-\eta)\right||\eta|^\alpha\left|\hat{j}(\eta)\right|\mathd \eta \right\|_{L^2}\right)\left\|\Lambda^{\beta+r}j\right\|_{L^2}
  \nonumber\\
  & &= C\left(\left\|\Lambda^\alpha \left(\left|\hat{u}\right|^{\vee}\right) \left|\hat{j}\right|^{\vee} \right\|_{L^2}+\left\|\Lambda^\alpha \left(\left|\hat{j}\right|^{\vee}\right) \left|\hat{u}\right|^{\vee} \right\|_{L^2}\right)\left\|\Lambda^{\beta+r}j\right\|_{L^2}
  \nonumber\\
  & &\leqslant C\left(\left\|\Lambda^\alpha \left(\left|\hat{u}\right|^{\vee}\right)\right\|_{L^4}\left\|\left|\hat{j}\right|^{\vee}\right\|_{L^4}+\left\|\Lambda^\alpha \left(\left|\hat{j}\right|^{\vee}\right)\right\|_{L^4}\left\|\left|\hat{u}\right|^{\vee}\right\|_{L^4}\right)\left\|\Lambda^{\beta+r} j\right\|_{L^2}
  \nonumber\\
  & &\leqslant C \left(\left\|u\right\|_{L^2}^{\frac{1}{2}-\alpha}\left\|\nabla u\right\|_{L^2}^{\frac{1}{2}+\alpha}\left\|j\right\|_{L^2}^{1-\frac{1}{2\beta}}\left\| \Lambda^\beta j \right\|_{L^2}^{\frac{1}{2\beta}}+ \left\|u\right\|_{L^2}^{\frac{1}{2}}\left\|\nabla u\right\|_{L^2}^{\frac{1}{2}} \left\|j\right\|_{L^2}^{\frac{2\beta-2\alpha-1}{2\beta}}\left\|\Lambda^\beta j\right\|_{L^2}^{\frac{1+2\alpha}{2\beta}}  \right)  \left\|\Lambda^{\beta+r} j\right\|_{L^2}
  \nonumber\\
  & &\leqslant C\epsilon \left\|\Lambda^{\beta+r} j\right\|_{L^2}^2  +  C(\epsilon)\left\|u\right\|_{L^2}^{1-2\alpha}\left\| \omega\right\|_{L^2}^{1+2\alpha}\left\|j\right\|_{L^2}^{2-\frac{1}{\beta}}\left\| \Lambda^\beta j \right\|_{L^2}^{\frac{1}{\beta}}
  \nonumber\\
  & &+ C(\epsilon)\left\|u\right\|_{L^2} \left\|\omega\right\|_{L^2} \left\|j\right\|_{L^2}^{\frac{2\beta-2\alpha-1}{\beta}}\left\|\Lambda^\beta j\right\|_{L^2}^{\frac{1+2\alpha}{\beta}},
\end{eqnarray*}
where we have used  the following Gagliardo-Nirenberg inequalities
\begin{eqnarray*}
  \left\|\Lambda^\alpha \left(\left|\hat{u}\right|^{\vee}\right)\right\|_{L^4} & \leqslant & C\left\|\left|\hat{u}\right|^{\vee}\right\|_{L^2}^{\frac{1}{2}-\alpha}\left\|\nabla \left(\left|\hat{u}\right|^{\vee}\right)\right\|_{L^2}^{\frac{1}{2}+\alpha}
  \leqslant C\left\|u\right\|_{L^2}^{\frac{1}{2}-\alpha}\left\|\nabla u\right\|_{L^2}^{\frac{1}{2}+\alpha},\\
   \left\| \left|\hat{u}\right|^{\vee}\right\|_{L^4} & \leqslant & C\left\|\left|\hat{u}\right|^{\vee}\right\|_{L^2}^{\frac{1}{2}}\left\|\nabla  \left(\left|\hat{u}\right|^{\vee}\right)\right\|_{L^2}^{\frac{1}{2}}
   \leqslant C\left\|u\right\|_{L^2}^{\frac{1}{2}}\left\|\nabla u\right\|_{L^2}^{\frac{1}{2}};\\
    \left\| \left|\hat{j}\right|^{\vee}\right\|_{L^4} & \leqslant & C\left\|\left|\hat{j}\right|^{\vee}\right\|_{L^2}^{1-\frac{1}{2\beta}}\left\| \Lambda^\beta\left( \left|\hat{j}\right|^{\vee}\right) \right\|_{L^2}^{\frac{1}{2\beta}}
    \leqslant C\left\|j\right\|_{L^2}^{1-\frac{1}{2\beta}}\left\| \Lambda^\beta j \right\|_{L^2}^{\frac{1}{2\beta}},\\
    \left\| \Lambda^\alpha \left( \left|\hat{j}\right|^{\vee}\right) \right\|_{L^4} & \leqslant & C\left\|\left|\hat{j}\right|^{\vee}\right\|_{L^2}^{\frac{2\beta-2\alpha-1}{2\beta}}\left\|\Lambda^\beta \left( \left|\hat{j}\right|^{\vee}\right) \right\|_{L^2}^{\frac{1+2\alpha}{2\beta}}
    \leqslant C\left\|j\right\|_{L^2}^{\frac{2\beta-2\alpha-1}{2\beta}}\left\|\Lambda^\beta
    j\right\|_{L^2}^{\frac{1+2\alpha}{2\beta}}.
\end{eqnarray*}
Similarly, we can deal with $I_2$ as follows.
\begin{eqnarray*}
  \left|I_2\right|
  & &=\left|\int_{\mathbbm{R}^2} \Lambda^r \left(b \cdot\nabla \omega\right) \Lambda^r j \mathd x\right|
  \nonumber\\
  & &=\left|\int_{\mathbbm{R}^2} \Lambda^{-1}\nabla\cdot\Lambda^\alpha \left(b \omega \right) \Lambda^{\beta+r}j \mathd x\right|
  \nonumber\\
  & &\leqslant \left\|\Lambda^{-1}\nabla\cdot\Lambda^\alpha(b \omega)\right\|_{L^2} \left\|\Lambda^{\beta+r}j\right\|_{L^2}
  \nonumber\\
  & &\leqslant C\left\|\Lambda^\alpha(b \omega)\right\|_{L^2} \left\|\Lambda^{\beta+r}j\right\|_{L^2}
  \nonumber\\
  & &= C\left\|\int_{\mathbbm{R}^2}|\xi|^\alpha\hat{b}(\xi-\eta)\hat{\omega}(\eta)\mathd \eta \right\|_{L^2}\left\|\Lambda^{\beta+r}j\right\|_{L^2}
  \nonumber\\
  & &\leqslant C \left(\left\|\int_{\mathbbm{R}^2}|\xi-\eta|^\alpha\left|\hat{b}(\xi-\eta)\right|\left|\hat{\omega}(\eta)\right|\mathd \eta \right\|_{L^2} + \left\|\int_{\mathbbm{R}^2}\left|\hat{b}(\xi-\eta)\right|\left|\eta\right|^\alpha\left|\hat{\omega}(\eta)\right|\mathd \eta \right\|_{L^2}\right)\left\|\Lambda^{\beta+r}j\right\|_{L^2}
  \nonumber\\
  & &= C\left(\left\|\Lambda^\alpha \left(\left|\hat{b}\right|^{\vee}\right) \left|\hat{\omega}\right|^{\vee} \right\|_{L^2}+\left\|\Lambda^\alpha \left(\left|\hat{\omega}\right|^{\vee}\right) \left|\hat{b}\right|^{\vee} \right\|_{L^2}\right)\left\|\Lambda^{\beta+r}j\right\|_{L^2}
  \nonumber\\
  & &\leqslant C\left(\left\|\Lambda^\alpha \left(\left|\hat{b}\right|^{\vee}\right) \right\|_{L^\infty} \left\|\omega \right\|_{L^2}+\left\|\left|\hat{b}\right|^{\vee} \right\|_{L^\infty}\left\|\Lambda^\alpha \omega\right\|_{L^2}\right)\left\|\Lambda^{\beta+r}j\right\|_{L^2}
  \nonumber\\
  & &  \leqslant C\left(\left\| b \right\|_{L^2}^{\frac{\beta-\alpha}{1+\beta}}\left\| \Lambda^{\beta+1}b \right\|_{L^2}^{\frac{1+\alpha}{1+\beta}} \left\|\omega \right\|_{L^2}+\left\|b \right\|_{L^2}^{\frac{r}{1+r}}\left\|\Lambda^{1+r}b \right\|_{L^2}^{\frac{1}{1+r}}\left\|\Lambda^\alpha \omega\right\|_{L^2}\right)\left\|\Lambda^{\beta+r}j\right\|_{L^2}
  \nonumber\\
  & & \leqslant C\epsilon \left\|\Lambda^{\beta+r}j\right\|_{L^2}^2 + C(\epsilon)\left( \left\| b \right\|_{L^2}^{\frac{2(\beta-\alpha)}{1+\beta}}\left\| \Lambda^\beta j \right\|_{L^2}^{\frac{2(1+\alpha)}{1+\beta}} \left\|\omega \right\|_{L^2}^2+\left\|b \right\|_{L^2}^{\frac{2r}{1+r}}\left\|\Lambda^r j \right\|_{L^2}^{\frac{2}{1+r}}\left\|\Lambda^\alpha \omega\right\|_{L^2}^2\right),
\end{eqnarray*}
where we have used the following Gagliardo-Nirenberg inequalities
\begin{eqnarray*}
    \left\|\Lambda^\alpha \left(\left|\hat{b}\right|^{\vee}\right) \right\|_{L^\infty} & \leqslant & C\left\| \left|\hat{b}\right|^{\vee} \right\|_{L^2}^{\frac{\beta-\alpha}{1+\beta}}\left\| \Lambda^{\beta+1}\left(\left|\hat{b}\right|^{\vee}\right) \right\|_{L^2}^{\frac{1+\alpha}{1+\beta}}\leqslant C\left\| b \right\|_{L^2}^{\frac{\beta-\alpha}{1+\beta}}\left\| \Lambda^{\beta+1}b \right\|_{L^2}^{\frac{1+\alpha}{1+\beta}},\\
    \left\|\left|\hat{b}\right|^{\vee} \right\|_{L^\infty}  & \leqslant & C\left\|\left|\hat{b}\right|^{\vee} \right\|_{L^2}^{\frac{r}{1+r}}\left\|\Lambda^{1+r}\left(\left|\hat{b}\right|^{\vee}\right)\right\|_{L^2}^{\frac{1}{1+r}}\leqslant
    C\left\|b \right\|_{L^2}^{\frac{r}{1+r}}\left\|\Lambda^{1+r}b \right\|_{L^2}^{\frac{1}{1+r}}.
\end{eqnarray*}
Now, we give estimate of $I_3$.
\begin{eqnarray*}
    \left|I_3\right|& &=\left|\int_{\mathbbm{R}^2} \Lambda^r T \left( \nabla u, \nabla b\right) \Lambda^r j \mathd x \right|
    \nonumber\\
    & &=\left|\int_{\mathbbm{R}^2}  T \left( \nabla u, \nabla b\right) \Lambda^{2r} j \mathd x \right|
    \nonumber\\
    & &\leqslant C \left\|\nabla u \right\|_{L^2} \left\|\nabla b \right\|_{L^\infty} \left\|\Lambda^{2r}j \right\|_{L^2}
    \nonumber\\
    & &\leqslant C \left\|\nabla u \right\|_{L^2} \left\|\nabla b \right\|_{L^2}^{\frac{\beta+r-1}{\beta+r}}\left\| \Lambda^{\beta+r+1}b \right\|_{L^2}^{\frac{1}{\beta+r}} \left\| j \right\|_{L^2}^{\frac{\beta-r}{\beta+r}}\left\| \Lambda^{\beta+r}j \right\|_{L^2}^{\frac{2r}{\beta+r}}
    \nonumber\\
    & &\leqslant C \left\|\nabla u \right\|_{L^2} \left\| j \right\|_{L^2}^{\frac{2\beta-1}{\beta+r}}\left\| \Lambda^{\beta+r}j \right\|_{L^2}^{\frac{2r+1}{\beta+r}}
    \nonumber\\
    & &\leqslant C\epsilon\left\| \Lambda^{\beta+r}j \right\|_{L^2}^2 + C(\epsilon)\left\| j \right\|_{L^2}^2 \left\| \omega\right\|_{L^2}^{\frac{2(\beta+r)}{2\beta-1}},
\end{eqnarray*}
where we have used the following Gagliardo-Nirenberg inequalities
\begin{eqnarray*}
  \left\|\nabla b \right\|_{L^\infty} & \leqslant & C\left\|\nabla b \right\|_{L^2}^{\frac{\beta+r-1}{\beta+r}}\left\| \Lambda^{\beta+r+1}b \right\|_{L^2}^{\frac{1}{\beta+r}},\\
  \left\|\Lambda^{2r} j \right\|_{L^2} & \leqslant & C\left\| j \right\|_{L^2}^{\frac{\beta-r}{\beta+r}}\left\| \Lambda^{\beta+r}j \right\|_{L^2}^{\frac{2r}{\beta+r}}.
\end{eqnarray*}
Substituting estimates of $I_1-I_3$ into \eqref{7}, we obtain
\begin{eqnarray*}
    \frac{1}{2}  \frac{\mathd}{\mathd t} \left\|\Lambda^r j \right\|_{L^2}^2 +\left\|\Lambda^{\beta+r}j \right\|_{L^2}^2
    & \leqslant & C\epsilon\left\| \Lambda^{\beta+r}j \right\|_{L^2}^2 +
     C(\epsilon)\left\|u\right\|_{L^2}^{1-2\alpha}\left\| \omega\right\|_{L^2}^{1+2\alpha}\left\|j\right\|_{L^2}^{2-\frac{1}{\beta}}\left\| \Lambda^\beta j \right\|_{L^2}^{\frac{1}{\beta}}
  \nonumber\\
  & &+ C(\epsilon)\left\|u\right\|_{L^2} \left\|\omega\right\|_{L^2} \left\|j\right\|_{L^2}^{\frac{2\beta-2\alpha-1}{\beta}}\left\|\Lambda^\beta j\right\|_{L^2}^{\frac{1+2\alpha}{\beta}}
   \nonumber\\
   & & + C(\epsilon)\left( \left\| b \right\|_{L^2}^{\frac{2(\beta-\alpha)}{1+\beta}}\left\| \Lambda^\beta j \right\|_{L^2}^{\frac{2(1+\alpha)}{1+\beta}} \left\|\omega \right\|_{L^2}^2+\left\|b \right\|_{L^2}^{\frac{2r}{1+r}}\left\|\Lambda^r j \right\|_{L^2}^{\frac{2}{1+r}}\left\|\Lambda^\alpha \omega\right\|_{L^2}^2\right)
   \nonumber\\
   & &+ C(\epsilon)\left\| j \right\|_{L^2}^2 \left\| \omega\right\|_{L^2}^{\frac{2(\beta+r)}{2\beta-1}}.
\end{eqnarray*}
 Choosing $\epsilon =\frac{1}{2C},$ we get

\begin{eqnarray}\label{8}
    \frac{\mathd}{\mathd t} \left\|\Lambda^r j \right\|_{L^2}^2 +\left\|\Lambda^{\beta+r}j \right\|_{L^2}^2
    & \leqslant &
     C(\epsilon)\left\|u\right\|_{L^2}^{1-2\alpha}\left\| \omega\right\|_{L^2}^{1+2\alpha}\left\|j\right\|_{L^2}^{2-\frac{1}{\beta}}\left\| \Lambda^\beta j \right\|_{L^2}^{\frac{1}{\beta}}
  \nonumber\\
  & &+ C(\epsilon)\left\|u\right\|_{L^2} \left\|\omega\right\|_{L^2} \left\|j\right\|_{L^2}^{\frac{2\beta-2\alpha-1}{\beta}}\left\|\Lambda^\beta j\right\|_{L^2}^{\frac{1+2\alpha}{\beta}}
   \nonumber\\
   & & + C(\epsilon)\left( \left\| b \right\|_{L^2}^{\frac{2(\beta-\alpha)}{\beta}}\left\| \Lambda^\beta j \right\|_{L^2}^{\frac{2(1+\alpha)}{1+\beta}} \left\|\omega \right\|_{L^2}^2+\left\|b \right\|_{L^2}^{\frac{2r}{1+r}}\left\|\Lambda^r j \right\|_{L^2}^{\frac{2}{1+r}}\left\|\Lambda^\alpha \omega\right\|_{L^2}^2\right)
   \nonumber\\
   & &+ C(\epsilon)\left\| j \right\|_{L^2}^2 \left\| \omega\right\|_{L^2}^{\frac{2(\beta+r)}{2\beta-1}}.
\end{eqnarray}
By assumptions of the lemma, we have
$0\leqslant\alpha<\frac{1}{2},\,\, \beta \geqslant 1,\,\,
r=\alpha+\beta-1>0 $, and hence $\frac{1}{\beta}\leqslant 1,
\frac{1+2\alpha}{\beta}\leqslant
2,\frac{2(1+\alpha)}{1+\beta}\leqslant 2,\frac{2}{1+r}\leqslant 2$.
Thus, due to Lemma 1, we have
\begin{equation*}
 \left\| \Lambda^\beta j \right\|_{L^2}^{\frac{1}{\beta}}, \left\|\Lambda^\beta j\right\|_{L^2}^{\frac{1+2\alpha}{\beta}},\left\| \Lambda^\beta j \right\|_{L^2}^{\frac{2(1+\alpha)}{1+\beta}},\left\|\Lambda^\alpha \omega\right\|_{L^2}^2\in L^1(0,T).
\end{equation*}
Using the Gronwall's inequality in \eqref{8}, we obtain
 \begin{equation*}
     \left\|\Lambda^r j\right\|_{L^2}^2 \left( t \right) + \int_0^t \left\| \Lambda^{\beta+r} j
    \right\|_{L^2}^2  \mathd \tau \leqslant C \left( u_0, b_0, T
    \right).
  \end{equation*}
The proof of the lemma is complete.

\end{proof}

\section{Proof of Theorem 1}\label{sec:proof of theorem 1}
In this section, we prove Theorem 1. We will prove that $\omega,
j\in L^1(0,T;L^\infty(\mathbb{R}^2))$ and Theorem 1 is then followed
from the BKM-type criterion. Two cases will be considered
respectively: $0<\alpha < 1 / 2, \beta \geqslant 1, 3\alpha + 2\beta
>3$ and $\alpha=0, \beta
>\frac{3}{2}$.

\vspace{3mm}

{\bf case I: $0<\alpha < 1 / 2, \beta \geqslant 1, 3\alpha + 2\beta
>3$}

We will first give $L^\infty(0,T;H^1)$ estimates for $\omega, j$.
Differentiating with respective to $x_i (i=1,2)$ on both sides of
\eqref{eq:omega-L2} and \eqref{eq:j-L2} respectively, we get
\begin{equation}
  \left( \partial_i \omega \right)_t + u \cdot \nabla \left( \partial_i \omega
  \right) = - \left( \partial_i u \right) \cdot \nabla \omega + \left(
  \partial_i b \right) \cdot \nabla j + b \cdot \nabla \left( \partial_i j
  \right) - \Lambda^{2 \alpha} \left( \partial_i \omega \right) \label{eq:omega-H1},
\end{equation}
\begin{equation}
  \left( \partial_i j \right)_t + u \cdot \nabla \left( \partial_i j \right) =
  - \left( \partial_i u \right) \cdot \nabla j + \left( \partial_i b \right)
  \cdot \nabla \omega + b \cdot \nabla \left( \partial_i \omega \right) +
  \partial_i \left( T \left( \nabla u, \nabla b \right) \right) - \Lambda^{2
  \beta} \left( \partial_i j \right) \label{eq:j-H1}.
\end{equation}
Multiplying  $\partial_i \omega$ and $\partial_i j$ on both sides of
(\ref{eq:omega-H1}) and (\ref{eq:j-H1}) respectively, integrating
with respect to $x$ in $\mathbb{R}^2$ and summing up i=1,2, we
obtain
\begin{eqnarray}
  \frac{1}{2}\frac{\mathd}{\mathd t} \left( \left\| \nabla \omega \right\|_{L^2}^2 +
  \left\| \nabla j \right\|_{L^2}^2 \right) & = & - \sum_{i=1}^{2}\int_{\mathbbm{R}^2}
  \left[ \left( \partial_i u \right) \cdot \nabla \omega \right]\partial_i \omega \mathd x +
   \sum_{i=1}^{2}\int_{\mathbbm{R}^2} \left[ \left(\partial_i b \right) \cdot \nabla j \right]
   \partial_i \omega \mathd x \nonumber\\
  &  & - \sum_{i=1}^{2}\int_{\mathbbm{R}^2} \left[ \left( \partial_i u \right) \cdot \nabla
  j \right] \partial_i j \mathd x + \sum_{i=1}^{2}\int_{\mathbbm{R}^2}\left[ \left( \partial_i b \right)
   \cdot \nabla \omega \right]\partial_i j \mathd x \nonumber\\
  &  & + \sum_{i=1}^{2}\int_{\mathbbm{R}^2} \left[ \partial_i \left( T \left( \nabla u,
  \nabla b \right) \right) \right] \partial_i j \mathd x\nonumber\\
  &  & - \left\| \Lambda^{\alpha} \nabla \omega
  \right\|_{L^2}^2 - \left\| \Lambda^{\beta} \nabla j \right\|_{L^2}^2\nonumber\\
  & \leqslant &  C \left( A_1 + A_2 + A_3
  + A_4 + A_5 \right) - \left\| \Lambda^{\alpha} \nabla \omega
  \right\|_{L^2}^2 - \left\| \Lambda^{\beta} \nabla j \right\|_{L^2}^2, \label{es:1}
\end{eqnarray}
where we have used $\nabla\cdot u=\nabla\cdot b=0$, and we denote
\begin{eqnarray*}
  A_1 & = & \int_{\mathbbm{R}^2} \left| \nabla u \right|  \left| \nabla \omega
  \right|^2 \mathd x, \\
  A_2 & = & \int_{\mathbbm{R}^2} \left| \nabla b \right|  \left| \nabla j
  \right|  \left| \nabla \omega \right| \mathd x , \\
  A_3 & = & \int_{\mathbbm{R}^2} \left| \nabla u \right|  \left| \nabla j
  \right|^2 \mathd x,  \\
  A_4 & = & \int_{\mathbbm{R}^2} \left| \nabla b \right|  \left| \nabla \omega
  \right|  \left| \nabla j \right| \mathd x,  \\
  A_5 & = & \int_{\mathbbm{R}^2} \left[ \left| \nabla^2 u \right|  \left|
  \nabla b \right| + \left| \nabla u \right|  \left| \nabla^2 b \right|
  \right]  \left| \nabla j \right| \mathd x.
\end{eqnarray*}
 $ A_2-A_5 $ can be estimated in a straight way (see also \cite{TYZ2013}), which are
\begin{eqnarray*}
    A_2=A_4 & \leqslant & C \left( \varepsilon
    \right) \left\| j \right\|_{L^2}^2 + \left\| \nabla j \right\|_{L^2}^2
    \left\| \nabla \omega \right\|_{L^2}^2 + C \varepsilon \left\| \Lambda
    \nabla j \right\|_{L^2}^2
    \nonumber\\
    & \leqslant & C \left( \varepsilon\right) \left\| j \right\|_{L^2}^2 + \left\| \nabla j \right\|_{L^2}^2
    \left\| \nabla \omega \right\|_{L^2}^2 + C \varepsilon \left\| \Lambda^\beta
    \nabla j \right\|_{L^2}^2,
    \nonumber\\
    A_3 & \leqslant & C \left(\varepsilon \right)  \left\| \omega \right\|_{L^2}^2  \left\| \nabla j
    \right\|_{L^2}^2 + C \varepsilon \left\| \Lambda \nabla j \right\|_{L^2}^2
    \nonumber\\
    & \leqslant & C \left( \varepsilon\right) \left\| j \right\|_{L^2}^2 + C \left(\varepsilon \right)  \left\| \omega \right\|_{L^2}^2  \left\| \nabla j
    \right\|_{L^2}^2 + C \varepsilon \left\| \Lambda^\beta \nabla j \right\|_{L^2}^2,
    \nonumber\\
    A_5 & \leqslant & C \left( \varepsilon\right) \left\| j \right\|_{L^2}^2+C \left(\varepsilon \right)  \left\| \omega \right\|_{L^2}^2  \left\| \nabla j\right\|_{L^2}^2  + \left\| \nabla j \right\|_{L^2}^2
    \left\| \nabla \omega \right\|_{L^2}^2 + C \varepsilon \left\| \Lambda
    \nabla j \right\|_{L^2}^2\nonumber\\
    & \leqslant & C \left( \varepsilon\right) \left\| j \right\|_{L^2}^2+C \left(\varepsilon \right)  \left\| \omega \right\|_{L^2}^2  \left\| \nabla j\right\|_{L^2}^2  + \left\| \nabla j \right\|_{L^2}^2
    \left\| \nabla \omega \right\|_{L^2}^2 + C \varepsilon \left\| \Lambda^\beta
    \nabla j \right\|_{L^2}^2.
\end{eqnarray*}

Now we deal with $A_1$.
\begin{eqnarray*}
    A_1 & = & \int_{\mathbbm{R}^2} \left| \omega \right|  \left| \nabla \omega \right|^2
    \mathd x \leqslant \left\| \omega \right\|_{L^{p}}  \left\| \nabla
    \omega \right\|_{L^{q}}^2 \nonumber\\
    & \leqslant & C \left\| \omega \right\|_{L^{p}}
    \left\| \nabla\omega \right\|_{L^2}^{2-\frac{2}{p\alpha}} \left\| \Lambda^{\alpha} \nabla \omega\right\|_{L^2}^{\frac{2}{p\alpha}}\nonumber\\
    & \leqslant & C\epsilon \left\| \Lambda^{\alpha} \nabla \omega\right\|_{L^2}^{2} + C(\epsilon)
    \left\| \omega \right\|_{L^{p}}^{\frac{p\alpha}{p\alpha-1}} \left\| \nabla\omega \right\|_{L^2}^{2},
\end{eqnarray*}
where we have used the following Gagliardo-Nirenberg inequality:
\begin{equation*}
   \left\| \nabla \omega \right\|_{L^{q}} \leqslant C \left\| \nabla\omega \right\|_{L^2}^{1-\frac{1}{p\alpha}} \left\| \Lambda^{\alpha} \nabla \omega\right\|_{L^2}^{\frac{1}{p\alpha}}.
\end{equation*}
Here
\begin{equation*}
   \frac{1}{\alpha} < p <\infty, \hspace{2em} \frac{1}{p} + \frac{2}{q} = 1,
\end{equation*}
and $p$ is to be determined later.

To  estimate $\left\| \omega \right\|_{L^{p}}$, we multiply on the
both sides of (\ref{eq:omega-L2}) by $p \left| \omega \right|^{p -
2} \omega$ and integrate with respect to $x$ in $\mathbb{R}^2$ to
obtain
\begin{equation*}
  \frac{1}{p}\frac{\mathd}{\mathd t} \left\| \omega \right\|_{L^p}^p
  + \int_{\mathbbm{R}^2} \left(\Lambda^{\alpha} \omega \right)  \left| \omega \right|^{p - 2} \omega \mathd x  \leqslant  \int_{\mathbbm{R}^2} \left| b \right|  \left| \nabla j \right|
  \left| \omega \right|^{p - 1} \mathd x,
\end{equation*}
where we have used $\nabla\cdot u = 0$.
Thanks to the following inequality (see {\cite{R1995}}{\cite{CC2004}}{\cite{W2005}}{\cite{J2005}})
\begin{equation*}
  \int_{\mathbbm{R}^2} \left( \Lambda^{\alpha} \omega \right)  \left| \omega
  \right|^{p - 2} \omega \mathd x \geqslant 0,
\end{equation*}
we get
\begin{equation}\label{12}
  \frac{\mathd}{\mathd t} \left\| \omega \right\|_{L^p} \leqslant \left\| b
  \cdot \nabla j \right\|_{L^p} \leqslant \left\| b \right\|_{L^{\infty}}
  \left\| \nabla j \right\|_{L^p}.
\end{equation}
It follows from Lemma 2 that
\begin{equation*}
  j \in L^2  \left( 0, T ; H^{\beta+r} \right).
\end{equation*}
Consequently, we have
\begin{equation*}
  b\in L^2(0,T;L^\infty)\,\,\, \mbox{and} \,\,\,\nabla j\in L^2(0,T;L^p) \left(\mbox{for some}\,\, p >\frac{1}{\alpha}\right).
\end{equation*}
In fact, if $\beta + r\geqslant 2$, i.e. $\alpha + 2\beta \geqslant
3$, we can choose $\frac{1}{\alpha}< p <\infty$, such that
\begin{equation*}
  \nabla j\in L^2(0,T;L^p);
\end{equation*}
if $\beta + r < 2$,  i.e. $\alpha + 2\beta < 3$, we have $ \frac{2}{2-(\beta+r)} = \frac{2}{3-(2\beta + \alpha)}>  \frac{1}{\alpha}$, so we can  choose  $ \frac{1}{\alpha}< p < \frac{2}{2-(\beta+r)} $, such that
\begin{equation*}
  \nabla j\in L^2(0,T;L^p),
\end{equation*}
where we have used the assumption $ 3\alpha + 2\beta >3 $ in the
theorem. Integrating on $t$ in $(0,t)$ on both sides of \eqref{12}
yields
\begin{equation}
  \left\| \omega \right\|_{L^p} \leqslant \left\| \omega_0 \right\|_{L^p} +
  \int_0^t \left\| b \right\|_{L^{\infty}}  \left\| \nabla j \right\|_{L^p}
  \mathd \tau \leqslant \left\| \omega_0 \right\|_{L^p} + \left\| b
  \right\|_{L^2 \left( 0, T ; L^{\infty} \right)}  \left\| \nabla j
  \right\|_{L^2 \left( 0, T ; L^p \right)} \leqslant C \left( \omega_0, T
  \right)\label{es:2},
\end{equation}
which implies that  $\left\| \omega \right\|_{L^p} \in L^{\infty}
\left( 0, T \right)$.

Putting the estimates of ($A_1-A_5$) into (\ref{es:1}), we have
\begin{eqnarray}
  \frac{1}{2}\frac{\mathd}{\mathd t} \left( \left\| \nabla \omega \right\|_{L^2}^2 +
  \left\| \nabla j \right\|_{L^2}^2 \right) & \leqslant &  C \left(\varepsilon \right) ( \left\| \omega \right\|_{L^2}^2 + \left\| \nabla j\right\|_{L^2}^2 +\left\| \omega \right\|_{L^{p}}^{\frac{p\alpha}{p\alpha-1}})(\left\| \nabla j \right\|_{L^2}^2+ \left\| \nabla \omega \right\|_{L^2}^2)\nonumber\\
  & & + C \left( \varepsilon\right) \left\| j \right\|_{L^2}^2
      + C\epsilon \left\| \Lambda^{\alpha} \nabla \omega\right\|_{L^2}^{2}
      +  C \varepsilon \left\| \Lambda^\beta \nabla j \right\|_{L^2}^2\nonumber\\
  & &- \left\| \Lambda^{\alpha} \nabla \omega
  \right\|_{L^2}^2 - \left\| \Lambda^{\beta} \nabla j \right\|_{L^2}^2.
\end{eqnarray}
Combining this with (\ref{es:2}) and lemma 1, taking $\epsilon$ so that $C\epsilon=\frac{1}{2}$, and utilizing the Gronwall's inequality, we get
$$
\left\| \nabla \omega \right\|_{L^2}^2 +
  \left\| \nabla j \right\|_{L^2}^2+\int_0^t \left\| \Lambda^{\alpha} \nabla \omega
  \right\|_{L^2}^2 +\left\| \Lambda^{\beta} \nabla j
  \right\|_{L^2}^2 dt\le C(T)
  $$
  which implies that
\begin{equation*}
  \nabla \omega, \nabla j \in L^{\infty} \left( 0, T ; L^2 \right), \nabla \omega \in L^2 \left( 0, T ; H^\alpha\right), \nabla j\in L^2 \left( 0, T ; H^\beta\right).
\end{equation*}
Thus, by the Sobolev imbedding, we have $\omega, j \in L^2 \left( 0,
T ; L^\infty \right)$. Applying the BKM type criterion for global
regularity (see \cite{CKS1997}), we get the proof of Theorem 1.

\vspace{3mm}

{\bf Case II: $\alpha=0, \beta >\frac{3}{2}$}

 In this case, since
$\alpha=0, \beta
>\frac{3}{2}$, we have $ r=\alpha+\beta-1 >\frac{1}{2} $ and $\beta + r >2$. It follows from Lemma 2 that
 $\nabla j \in L^2 \left( 0, T ; L^{\infty} \right)$.
Since
\begin{equation*}
    \omega_t + u \cdot \nabla \omega = b \cdot \nabla j,
\end{equation*}
we can prove that $\omega \in L^{\infty} \left( 0, T ; L^{\infty}
\right)$ by using the particle trajectory method. By taking
advantage of the BKM type criterion for global regularity (see
\cite{CKS1997}), we finish the proof of Theorem 1.

\end{document}